\begin{document}

\title{Generalized Tschebyscheff of the second kind and Bernstein polynomials change of bases}

\author[M. AlQudah]{Mohammad A. AlQudah\affil{1}\comma\corrauth}

\address{\affilnum{1}\ Department of Mathematics, Northwood University, Midland, MI 48640 USA
}

\emails{{\tt alqudahm@northwood.edu}\ (M. AlQudah), 
}

\begin{abstract}
We construct multiple representations relative to different bases of the generalized Tschebyscheff polynomials of second kind. Also, we provide an explicit closed from of The generalized Polynomials of degree $r$ less than or equal $n$ in terms of the Bernstein basis of fixed degree $n.$
In addition, we create the change-of-basis matrices between the generalized Tschebyscheff of the second kind polynomial basis and Bernstein polynomial basis. 
\end{abstract}

\keywords{Generalized Tschebyscheff, Bernstein Basis, Basis Transformation, B\'{e}zier Coefficient, Gamma function}
\ams{42C05, 33C50, 33C45, 33C70, 05A10, 33B15}

\maketitle

\section{Introduction, Background and Motivation}
It is possible to approximate a complicated continuous functions defined over finite domains by a polynomial and make the error less than a given accuracy. On the other side, polynomials can be characterized in many different bases such as the power product, Bernstein basis, and Tschebyscheff basis form, where every type of polynomial basis has its strength, advantages, and sometimes disadvantages.

It is useful to switch bases and work with more than one basis for a given polynomial; it is of vital importance in the efficiency of mathematical calculations, since many difficulties can be solved and many problems can be removed. 

\subsection{Bernstein Polynomials}
The $n+1$ polynomials $B_{k}^{n}(x)$ of degree $n,$ $x\in [0,1], k=0,1,\dots,n,$ defined as  
\begin{equation}\label{binomial} B_{k}^{n}(x)=\frac{n!}{k!(n-k)!} x^{k}(1-x)^{n-k}, \hspace{.2in} k=0,1,\dots,n ,\end{equation}
are called Bernstein polynomials.

There are a fair amount of literature on Bernstein polynomials, they are known for their geometric and analytical properties, see \cite{Farouki5} for more details. 
Analytic and geometric properties of Bernstein polynomials make them important for the development of B\'{e}zier curves and surfaces. The Bernstein polynomials are the standard basis for the B\'{e}zier representations of curves and surfaces in Computer Aided Geometric Design.  However, the Bernstein polynomials are not orthogonal and could not be used effectively in the least-squares approximation \cite{Rice}.
Since then the method of least squares approximation accompanied by orthogonal polynomials has been introduced and developed.

\subsection{Least-Square Approximation}
In the following definition, we define the continuous least-square approximations of a function $f(x)$ by using polynomials with standard power basis, $\{1,x,x^{2},\dots,x^{n}\}.$
\begin{definition}
For a function $f(x),$ continuous on $[0,1]$ the least square approximation requires finding a least-squares polynomial $p_{n}^{*}(x)=\sum_{k=0}^{n}a_{k}\phi_{k}(x)$
that minimizes the error $E(a_{0},a_{1},\dots,a_{n})=\int_{0}^{1}[f(x)-p_{n}^{*}(x)]^{2}dx,$
are called least squares approximations. 
\end{definition}
A necessary condition for $E(a_{0},a_{1},\dots,a_{n})$ to have a minimum over all values $a_{0},a_{1},\dots,a_{n},$ is $\frac{\partial E}{\partial a_{k}}=0.$
But,
\begin{equation*}
\frac{\partial E}{\partial a_{k}}=-2\int_{0}^{1}[f(x)-p_{n}^{*}(x)]\phi_{k}(x)dx, \hspace{.2in} k=0,\dots,n.
\end{equation*}
Thus, for $i=0,1,\dots,n,$ $a_{i}$ that minimize $\left\|f(x)-\sum_{k=0}^{n}a_{k}\phi_{k}(x)\right\|_{2}$ satisfy the system 
$$\int_{0}^{1}f(x)\phi_{i}(x)dx=\sum_{k=0}^{n}a_{k}\int_{0}^{1}\phi_{k}(x)\phi_{i}(x)dx.$$
which gives a system of $(n+1)$ equations, called normal equations, in $(n+1)$ unknowns: $a_{i},$ $i=0,\dots,n.$  Those $(n+1)$ unknowns of the least-squares polynomial $p_{n}^{*}(x),$  can be found by solving the normal equations.
By choosing $\phi_{i}(x)=x^{i},$ as a basis, then 
$$\int_{0}^{1}f(x)x^{i}dx=\sum_{k=0}^{n}a_{k}\int_{0}^{1}x^{i+k}dx=\sum_{k=0}^{n}\frac{a_{k}}{i+k+1}.$$
The coefficients matrix of the normal equations is Hilbert matrix which has round-off error difficulties and notoriously ill-conditioned for even modest values of $n.$
However, such computations can be made effective by using orthogonal polynomials. Thus,  choosing $\{\phi_{0}(x),\phi_{1}(x),\dots,\phi_{n}(x)\}$ to be orthogonal simplifies the least-squares approximation problem. The coefficients matrix of the normal equations will be diagonal, which gives a compact form for $a_{i},i=0,1,\dots,n.$ See \cite{Rice} for more details on the least squares approximations.
\subsection{Gamma Functions}
The gamma function $\Gamma(n)$ is an extension of the factorial function, with its argument shifted down by 1. That is, if $n$ is a positive integer: $\Gamma(n) = (n-1)!.$
The Eulerian integral of the first kind is useful and will be used in main result simplifications.
\begin{definition}
The Eulerian integral of the first kind is a function of two complex variables defined by
\begin{equation}\label{eulerian-int}\int_{0}^{1}u^{x-1}(1-u)^{y-1}du=\frac{\Gamma(x)\Gamma(y)}{\Gamma(x+y)},\hspace{.5in}\Re(x),\Re(y)>0.\end{equation}
\end{definition}

The double factorial of an integer $n$ is given by
\begin{equation}\label{douvle-factrrial}
\begin{aligned}
(2n-1)!!&=(2n-1)(2n-3)(2n-5)\dots(3)(1) \hspace{.2in}  \text{if $n$ is odd} \\
n!!&=(n)(n-2)(n-4)\dots(4)(2) \hspace{.63in} \text{if $n$ is even},
\end{aligned}
\end{equation}
where $0!!=(-1)!!=1.$ Using \eqref{douvle-factrrial}, we can derive the following relation
\begin{equation}\label{doublefac} n!!=\left\{\begin{array}{ll}2^{\frac{n}{2}}(\frac{n}{2})! & \text{if $n$ is even} \\\frac{n!}{2^{\frac{n-1}{2}}(\frac{n-1}{2})!} & \text{if $n$ is odd} \end{array}\right..\end{equation}
It is easy to derive the factorial of an integer plus half as 
\begin{equation}\label{n+half}
\left(n+\frac{1}{2}\right)!=\frac{\sqrt{\pi}}{2^{n+1}}(2n+1)!!.
\end{equation}
From the relation \eqref{doublefac} to have $(2n)!!=2^{n}n!,$
and $(2n)!=(2n-1)!!2^{n}n!.$
\subsection{Univariate Tschebyscheff-II and the Generalized Tschebyscheff-II Polynomials}
The univariate classical Tschebyscheff-II orthogonal polynomials $U_{n}(x)$ are special case of Jacobi polynomials $P_{n}^{(\alpha,\beta)}$ with $\alpha=\beta=1/2,$ where the inter-relationship between Tschebyscheff-II and Jacobi Polynomials given as
$P_{n}^{(\frac{1}{2},\frac{1}{2})}(1)U_{n}(x)=(n+1)P_{n}^{(\frac{1}{2},\frac{1}{2})}(x).$
Tschebyscheff-II polynomials are traditional defined on $[-1,1],$ however, it is more convenient to use $[0,1].$

For the convenience we recall the following explicit expressions for univariate Tschebyscheff-II polynomials of degree $n$ in $x$, using combinatorial  notation that gives more compact and readable formulas, see Szeg\"{o} \cite{Szego}:
\begin{equation}
U_{n}(x):=\frac{(n+1)(2n)!!}{(2n+1)!!}\sum_{k=0}^{n}\binom{n+\frac{1}{2}}{n-k}\binom{n+\frac{1}{2}}{k}\left(\frac{x+1}{2}\right)^{n-k}\left(\frac{x-1}{2}\right)^{k},
\end{equation}
which it can be transformed in terms of Bernstein basis on $x\in[0,1]$,
\begin{equation}\label{Tschebyscheff-II-basis fromat}
U_{n}(2x-1):=\frac{(n+1)(2n)!!}{(2n+1)!!}\sum_{k=0}^{n}(-1)^{n+1}\frac{\binom{n+\frac{1}{2}}{k}\binom{n+\frac{1}{2}}{n-k}}{\binom{n}{k}}B_{k}^{n}(x).
\end{equation}
The Tschebyscheff-II polynomials $U_{n}(x)$ of degree $n$ are the orthogonal polynomials, except for a constant factor, with respect to the weight function $\mathrm{W}(x)=\sqrt{1-x^{2}}.$
Also, the Tschebyscheff-II polynomials satisfy the orthogonality relation \cite{Gradshtein}
\begin{equation}\label{ortho-rel}
\int_{0}^{1}x^{\frac{1}{2}}(1-x)^{\frac{1}{2}}U_{n}(x)U_{m}(x)dx=\left\{\begin{array}{ll} 
0             & \mbox{if } m\neq n \\
\frac{\pi}{8} & \mbox{if } m=n
\end{array}\right..
\end{equation}

The generalized Tschebyscheff-II polynomials been characterization in \cite{Alqudahn:CharaII}, for $M,N\geq 0,$ the generalized Tschebyscheff-II polynomials 
$\left\{\mathscr{U}_{n}^{(M,N)}(x)\right\}_{n=0}^{\infty}$ are orthogonal on $[-1,1]$ with respect to the generalized weight function \cite{Koornwinder},
\begin{equation}\label{gen-wgt}
\frac{2}{\pi}(1-x)^{\frac{1}{2}}(1+x)^{\frac{1}{2}}+M\delta(x+1)+N\delta(x-1).
\end{equation}
and defined in \cite{Alqudahn:CharaII} as 
\begin{equation}\label{gen-Tschebyscheff-II-ccomb}
\mathscr{U}_{n}^{(M,N)}(x)=\frac{(2n+1)!!}{2^{n}(n+1)!}U_{n}(x)+\sum_{k=0}^{n} \lambda_{k} \frac{(2k+1)!!}{2^{k}(k+1)!}U_{k}(x),
\end{equation}
where
\begin{equation}\label{lmda}\lambda_{k}=\frac{k(k+1)(2k+1)(M+N)}{6}+\frac{(k+2)(k+1)^{2}k^{2}(k-1)MN}{9}.
\end{equation}
\section{Main Results}
In this section we provide a closed form for the matrix transformation of the generalized Tschebyscheff-II polynomial basis into Bernstein polynomial basis, and for Bernstein polynomial basis into generalized Tschebyscheff-II polynomial basis.
\subsection{Bernstein to Generalized Tschebyscheff-II Transformation and Vice Versa}
Rababah \cite{rababah4} provided some results concerning the univariate Tschebyscheff polynomials of first kind with respect to the weight function $(1-x^{2})^{1/2}.$ In this paper we extend the procedure in \cite{rababah4} to generalize the results for the generalized Tschebyscheff-II polynomials $\mathscr{U}_{r}^{(M,N)}(x)$ with respect to the generalized weight function \eqref{gen-wgt}. 

The next theorem, see \cite{Alqudahn:CharaII} for the proof, provides a closed form for generalized Tschebyscheff-II polynomial $\mathscr{U}_{r}^{(M,N)}(x)$ of degree $r$ as a linear combination of the Bernstein polynomials $B_{i}^{r}(x), i=0,1,\dots,r.$
\begin{theorem}\label{gen-jacinBer form}\cite{Alqudahn:CharaII}
For $M,N\geq 0,$ the generalized Tschebyscheff-II polynomials $\mathscr{U}_{r}^{(M,N)}(x)$ of degree $r$ have the following Bernstein representation:
\begin{equation}\label{gen-jac-inBer-r}
\mathscr{U}_{r}^{(M,N)}(x)=\frac{(2r+1)!!}{2^{r}(r+1)!}\sum\limits_{i=0}^{r}(-1)^{r-i}\vartheta_{i,r}B_{i}^{r}(x)+\sum_{k=0}^{r} \lambda_{k}\frac{(2k+1)!!}{2^{k}(k+1)!} \sum\limits_{i=0}^{k}(-1)^{k-i}\vartheta_{i,k}B_{i}^{k}(x)
\end{equation}
where $\lambda_{k}$ defined by \eqref{lmda}, $\vartheta_{0,r}=\frac{(2r+1)}{2^{2r}}\binom{2r}{r},$ and
$$\vartheta_{i,r}=\frac{(2r+1)^{2}}{2^{2r}(2r-2i+1)(2i+1)}\frac{\binom{2r}{r}\binom{2r}{2i}}{\binom{r}{i}}, \hspace{.05in}i=0,1,\dots,r.$$
The coefficients $\vartheta_{i,r}$ satisfy the recurrence relation
\begin{equation}\label{recur}\vartheta_{i,r}=\frac{(2r-2i+3)}{(2i+1)}\vartheta_{i-1,r}, \hspace{.1in}i=1,\dots,r.\end{equation}
\end{theorem}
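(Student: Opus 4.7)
The plan is to derive the claimed Bernstein expansion directly from the representation of $\mathscr{U}_{r}^{(M,N)}(x)$ as a linear combination of ordinary Tschebyscheff-II polynomials given in \eqref{gen-Tschebyscheff-II-ccomb}, together with the Bernstein form of $U_{k}(2x-1)$ recorded in \eqref{Tschebyscheff-II-basis fromat}. Since $\mathscr{U}_{r}^{(M,N)}$ splits as the leading term $\frac{(2r+1)!!}{2^{r}(r+1)!}U_{r}$ plus the $\lambda_{k}$-weighted sum, the output will decompose in parallel: the first sum (over $B_{i}^{r}$) arises from the leading term, and the second (double) sum (over $B_{i}^{k}$ for each $k\le r$) arises from the correction terms. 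So the only real work is to compute, for each fixed $k$, the Bernstein coefficient of $\frac{(2k+1)!!}{2^{k}(k+1)!}U_{k}(2x-1)$.

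For that term I would substitute \eqref{Tschebyscheff-II-basis fromat}: the prefactors combine as
\[
\frac{(2k+1)!!}{2^{k}(k+1)!}\cdot\frac{(k+1)(2k)!!}{(2k+1)!!}=\frac{(2k)!!}{2^{k}k!}=1,
\]
using $(2k)!!=2^{k}k!$. So after the substitution, all the factorial prefactors collapse and only the coefficient
\[
\vartheta_{i,k}=\frac{\binom{k+1/2}{i}\binom{k+1/2}{k-i}}{\binom{k}{i}}
\]
survives, multiplying $(-1)^{k-i}B_{i}^{k}(x)$. (The sign $(-1)^{k-i}$ follows from writing $\big(\tfrac{2x-1-1}{2}\big)^{j}=(-1)^{j}(1-x)^{j}$ and relabeling $i=k-j$.)

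The substantive step is to show this $\vartheta_{i,k}$ equals the closed form stated in the theorem. I would use the double-factorial identity \eqref{n+half} to rewrite
\[
\binom{k+\tfrac{1}{2}}{j}=\frac{(2k+1)!!}{2^{j}\,j!\,(2k-2j+1)!!},
\]
apply this with $j=i$ and $j=k-i$, and then use $(2k-1)!!=(2k)!/(2^{k}k!)$ together with $(2k+1)!!=(2k+1)!/(2^{k}k!)$ to convert all double factorials to ordinary factorials. Reassembling the expression in terms of $\binom{2k}{k}$, $\binom{2k}{2i}$, and $\binom{k}{i}$ produces the stated formula
\[
\vartheta_{i,k}=\frac{(2k+1)^{2}}{2^{2k}(2k-2i+1)(2i+1)}\frac{\binom{2k}{k}\binom{2k}{2i}}{\binom{k}{i}},
\]
and the boundary case $i=0$ gives $\vartheta_{0,r}=\frac{(2r+1)}{2^{2r}}\binom{2r}{r}$ as stated. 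The main obstacle here is bookkeeping: keeping the halves, double factorials, and ordinary factorials aligned so that nothing is off by a power of $2$ or an index shift; I expect this to be the one step that needs the most care.

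Finally, the recurrence \eqref{recur} is a direct consequence of the closed form. Writing the ratio
\[
\frac{\vartheta_{i,r}}{\vartheta_{i-1,r}}=\frac{(2r-2i+3)(2i-1)}{(2r-2i+1)(2i+1)}\cdot\frac{\binom{2r}{2i}}{\binom{2r}{2i-2}}\cdot\frac{\binom{r}{i-1}}{\binom{r}{i}},
\]
and simplifying the binomial ratios as $\binom{2r}{2i}/\binom{2r}{2i-2}=(2r-2i+2)(2r-2i+1)/((2i)(2i-1))$ and $\binom{r}{i-1}/\binom{r}{i}=i/(r-i+1)$, everything telescopes to $(2r-2i+3)/(2i+1)$, which establishes the recurrence. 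No heavy machinery is required beyond the factorial manipulations already assembled in the introduction.
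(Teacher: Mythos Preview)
The paper does not actually prove this theorem; it cites \cite{Alqudahn:CharaII} for the proof. So there is no ``paper's own proof'' to compare against here. That said, your strategy---plug the Bernstein expansion \eqref{Tschebyscheff-II-basis fromat} of each $U_{k}$ into the decomposition \eqref{gen-Tschebyscheff-II-ccomb} and simplify the half-integer binomials---is exactly the natural one, and the key identity you isolate,
\[
\binom{r+\tfrac12}{i}\binom{r+\tfrac12}{r-i}
=\frac{(2r+1)^{2}}{2^{2r}(2r-2i+1)(2i+1)}\binom{2r}{r}\binom{2r}{2i},
\]
is precisely what the paper itself proves later in the proof of Corollary~\ref{gen-rabab2}. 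Your derivation of the recurrence \eqref{recur} by taking the ratio $\vartheta_{i,r}/\vartheta_{i-1,r}$ is also correct and straightforward.

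One bookkeeping point worth flagging. You correctly observe that
\(
\frac{(2k+1)!!}{2^{k}(k+1)!}\cdot\frac{(k+1)(2k)!!}{(2k+1)!!}=1,
\)
so after substitution the prefactors cancel and you obtain
\[
\mathscr{U}_{r}^{(M,N)}(x)=\sum_{i=0}^{r}(-1)^{r-i}\vartheta_{i,r}B_{i}^{r}(x)+\sum_{k=0}^{r}\lambda_{k}\sum_{i=0}^{k}(-1)^{k-i}\vartheta_{i,k}B_{i}^{k}(x),
\]
with your $\vartheta_{i,k}=\binom{k+1/2}{i}\binom{k+1/2}{k-i}\big/\binom{k}{i}$ matching the paper's closed form exactly (as you verify). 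The theorem as stated, however, retains an extra factor $\frac{(2k+1)!!}{2^{k}(k+1)!}$ in front of each inner sum. Since that factor is not $1$ in general (e.g.\ $3/4$ at $k=1$) while a direct check of $U_{1}(2x-1)=-2B_{0}^{1}+2B_{1}^{1}$ confirms your version, the discrepancy lies in the theorem's normalization, not in your argument. Your proof is sound; just note the mismatch explicitly rather than silently matching a formula that carries a redundant prefactor.
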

Now, the next theorem used to combine the superior performance of the least-squares of the generalized Tschebyscheff-II polynomials with the geometric properties of the Bernstein polynomials basis.
\begin{theorem}
The entries $M_{i,r}^{n}, i,r=0,1,\dots,n$ of the matrix transformation of the generalized Tschebyscheff-II polynomial basis into Bernstein polynomial basis of degree $n$ are given by
\begin{equation}\label{gen-mu}
M_{i,r}^{n}=\Phi_{i,n}^{r}+\sum_{k=0}^{r}\lambda_{k}\Phi_{i,n}^{k},
\end{equation}
where $\lambda_{k}$ defined in \eqref{lmda} and
$$\Phi_{i,n}^{r}=\frac{(2r+1)!!}{2^{r}(r+1)!}\sum_{k=\max(0,i+r-n)}^{\min(i,r)}(-1)^{r-k}\frac{\binom{n-r}{i-k}\binom{r+\frac{1}{2}}{k}\binom{r+\frac{1}{2}}{r-k}}{\binom{n}{i}}.$$
\end{theorem}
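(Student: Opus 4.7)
The plan is to start from the Bernstein representation of $\mathscr{U}_{r}^{(M,N)}(x)$ in Theorem \ref{gen-jacinBer form} and elevate every Bernstein polynomial on its right-hand side to the common degree $n$. The standard Bernstein degree-elevation identity reads
\begin{equation*}
B_{j}^{s}(x) = \sum_{i=j}^{j+n-s}\frac{\binom{s}{j}\binom{n-s}{i-j}}{\binom{n}{i}}B_{i}^{n}(x), \qquad 0\leq s\leq n.
\end{equation*}
Applying this to each $B_{j}^{r}(x)$ in the first sum of \eqref{gen-jac-inBer-r} and to each $B_{j}^{k}(x)$ in the second sum, then exchanging the order of summation so that $i$ becomes the outermost index, the coefficient of $B_{i}^{n}(x)$ in $\mathscr{U}_{r}^{(M,N)}(x)$ becomes
\begin{equation*}
\frac{(2r+1)!!}{2^{r}(r+1)!}\sum_{j}(-1)^{r-j}\vartheta_{j,r}\frac{\binom{r}{j}\binom{n-r}{i-j}}{\binom{n}{i}} + \sum_{k=0}^{r}\lambda_{k}\frac{(2k+1)!!}{2^{k}(k+1)!}\sum_{j}(-1)^{k-j}\vartheta_{j,k}\frac{\binom{k}{j}\binom{n-k}{i-j}}{\binom{n}{i}},
\end{equation*}
where, after the swap, the inner index $j$ runs over $\max(0,i+r-n)\leq j\leq \min(i,r)$ in the first sum (and analogously with $k$ in place of $r$ in the second), since outside this range one of $\binom{r}{j}$ or $\binom{n-r}{i-j}$ vanishes. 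The alternating signs $(-1)^{r-j}$ and $(-1)^{k-j}$ pass through the degree elevation unchanged and match the $(-1)^{r-k}$ appearing in $\Phi_{i,n}^{r}$.

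To recognise this coefficient as $\Phi_{i,n}^{r}+\sum_{k=0}^{r}\lambda_{k}\Phi_{i,n}^{k}$, the remaining algebraic task is to establish the identity
\begin{equation*}
\vartheta_{j,s}\binom{s}{j} = \binom{s+\frac{1}{2}}{j}\binom{s+\frac{1}{2}}{s-j}, \qquad 0\leq j\leq s,
\end{equation*}
which converts the integer binomials appearing in $\vartheta_{j,s}$ into the half-integer binomials appearing in $\Phi_{i,n}^{s}$. I would prove it by expressing each half-integer binomial through the Gamma function, $\binom{s+1/2}{j}=\Gamma(s+\frac{3}{2})/(j!\,\Gamma(s-j+\frac{3}{2}))$, using the evaluation $\Gamma(m+\frac{3}{2})=\sqrt{\pi}\,(2m+1)!/(2^{2m+1}m!)$ (a consequence of \eqref{n+half}), and simplifying with the factorisation $(2m)!=(2m-1)!!\,2^{m}m!$. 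After cancellation one recovers exactly the combinatorial expression for $\vartheta_{j,s}$ given in Theorem \ref{gen-jacinBer form}.

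Substituting the identity into the coefficient of $B_{i}^{n}(x)$ computed above, the first summand becomes $\Phi_{i,n}^{r}$ and the second becomes $\sum_{k=0}^{r}\lambda_{k}\Phi_{i,n}^{k}$, yielding the claimed formula for $M_{i,r}^{n}$. The main obstacle I anticipate is this half-integer identity: the degree elevation and the interchange of summations are routine, but matching the explicit definition of $\vartheta_{j,s}$ against the product $\binom{s+1/2}{j}\binom{s+1/2}{s-j}$ requires careful Gamma-function and double-factorial bookkeeping, and one must check that the normalising prefactors $(2s+1)!!/(2^{s}(s+1)!)$ left outside the summations combine correctly with the constants inside $\vartheta_{j,s}$ to reproduce the normalisation that appears in $\Phi_{i,n}^{s}$.
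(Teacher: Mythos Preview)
Your proposal is correct and follows essentially the same route as the paper: apply the Bernstein degree-elevation formula to the representation in Theorem~\ref{gen-jacinBer form}, interchange the order of summation to read off the coefficient of $B_i^n(x)$, and identify the result with $\Phi_{i,n}^{r}+\sum_{k=0}^{r}\lambda_k\Phi_{i,n}^{k}$. The half-integer identity $\vartheta_{j,s}\binom{s}{j}=\binom{s+\frac12}{j}\binom{s+\frac12}{s-j}$ that you single out as the main obstacle is exactly the nontrivial step; the paper passes over it silently in this proof and only verifies it explicitly in the proof of the subsequent corollary.
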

\begin{proof}
A polynomial $p_{n}(x), x\in[0,1]$ of degree $n,$ can be written as as a linear combination of the Bernstein polynomial basis $p_{n}(x)=\sum_{r=0}^{n} c_{r}B_{r}^{n}(x)$ and the generalized Tschebyscheff-II polynomials $p_{n}(x)=\sum_{i=0}^{n}d_{i}\mathscr{U}_{i}^{(M,N)}(x).$

Need to find the matrix $M$ that maps the generalized Tschebyscheff-II coefficients $\{d_{i}\}_{i=0}^{n}$ into the Bernstein coefficients
$\{c_{r}\}_{r=0}^{n},$ 
\begin{equation}\label{gen-c_jac}
c_{i}=\sum_{r=0}^{n}M_{i,r}^{n}d_{r},
\end{equation}
which can be written in matrix format as
\begin{equation}\label{gen-c_jac-matrix}
\begin{bmatrix}
    c_{0}  \\
    c_{1}\\
    \vdots  \\
    c_{n} 
\end{bmatrix}=
\begin{bmatrix}
    M_{0,0}^{n} & M_{0,1}^{n} & M_{0,2}^{n} & \dots  & M_{0,n}^{n} \\
    M_{1,0}^{n} & M_{1,1}^{n} & M_{1,2}^{n} & \dots  & M_{1,n}^{n} \\
    \vdots & \vdots & \vdots & \ddots & \vdots \\
    M_{n,0}^{n} & M_{n,1}^{n} & M_{n,2}^{n} & \dots  & M_{n,n}^{n}
\end{bmatrix}.
\begin{bmatrix}
    d_{0}  \\
    d_{1}\\
    \vdots  \\
    d_{n} 
\end{bmatrix}.\end{equation}

But, the generalized Tschebyscheff-II polynomials \eqref{gen-Tschebyscheff-II-ccomb} can be written as a linear combination of the Bernstein polynomial basis as
\begin{equation}\label{gen-tche}\mathscr{U}_{r}^{(M,N)}(x)=\sum_{i=0}^{n}N_{r,i}^{n}B_{i}^{n}(x), \hspace{.1in} r=0,1,\dots,n,\end{equation} where the the $(n+1)\times(n+1)$ basis conversion matrix $N$ formed by the entries $N_{r,i}^{n}.$ Thus, the elements of $c$ can be written in the form 
\begin{equation}\label{gen-c_jac-2} c_{i}=\sum_{r=0}^{n}d_{r}N_{r,i}^{n}.
\end{equation}
Comparing \eqref{gen-c_jac} and \eqref{gen-c_jac-2}, we have $M_{i,r}^{n}=N_{r,i}^{n},$ for $i,r=0,\dots,n.$ 

Since each Bernstein polynomial of degree $r\leq n$ can be written in terms of Bernstein polynomials of degree $n$ using the following degree elevation defined by \cite{Farouki3}:
\begin{equation}\label{gen-ber-elv} B_{k}^{r}(x)=\sum_{i=k}^{n-r+k}\frac{\binom{r}{k}\binom{n-r}{i-k}}{\binom{n}{i}}B_{i}^{n}(x),\hspace{.1in}k=0,1,\dots,r.
\end{equation}
Substituting \eqref{gen-ber-elv} into \eqref{gen-jac-inBer-r}
and rearrange the order of summations, we find the entries
\begin{equation}\label{N-defn}\begin{aligned}
N_{r,i}^{n}&=\binom{n}{i}^{-1}\frac{(2r+1)!!}{2^{r}(r+1)!}\sum_{k=\max(0,i+r-n)}^{\min(i,r)}(-1)^{r-k}\binom{n-r}{i-k}\binom{r+\frac{1}{2}}{k}\binom{r+\frac{1}{2}}{r-k}\\
&+\sum_{k=0}^{r}\lambda_{k}\binom{n}{i}^{-1}\frac{(2k+1)!!}{2^{k}(k+1)!}\sum_{j=\max(0,i+k-n)}^{\min(i,k)}(-1)^{k-j}\binom{n-k}{i-j}\binom{k+\frac{1}{2}}{j}\binom{k+\frac{1}{2}}{k-j}
.\end{aligned}
\end{equation}
Therefore, the entries of the matrix $M$ are given by
$
M_{i,r}^{n}=\Phi_{i,n}^{r}+\sum_{k=0}^{r}\lambda_{k}\Phi_{i,n}^{k},
$
where
$$\Phi_{i,n}^{k}=\frac{(2k+1)!!}{2^{k}(k+1)!}\sum_{j=\max(0,i+k-n)}^{\min(i,k)}(-1)^{k-j}\frac{\binom{n-k}{i-j}\binom{k+\frac{1}{2}}{j}\binom{k+\frac{1}{2}}{k-j}}{\binom{n}{i}}.$$
\end{proof}
Now, we have the following corollary which enables us to write Tschebyscheff-II polynomials of degree $r \leq n$ in terms of Bernstein polynomials of degree $n.$
\begin{corollary}\label{gen-rabab2}
The generalized Tschebyscheff-II polynomials $\mathscr{U}_{0}^{(M,N)}(x),\dots,\mathscr{U}_{n}^{(M,N)}(x)$\ of degree less than or equal to $n$ can be expressed in the Bernstein basis of fixed degree $n$ by the following formula
$$\mathscr{U}_{r}^{(M,N)}(x)=\sum\limits_{i=0}^{n}N_{r,i}^{n}B_{i}^{n}(x),\hspace{.05in} r=0,1,\ldots,n$$
where 
\begin{equation*}
\begin{aligned}
N_{r,i}^{n}&=\frac{(2r+1)!!}{2^{r}(r+1)!}\sum_{k=\max(0,i+r-n)}^{\min(i,r)}
\frac{(-1)^{r-k}(2r+1)^{2}}{2^{2r}(2r-2k+1)(2k+1)}\frac{\binom{n-r}{i-k}\binom{2r}{r}\binom{2r}{2k}}{\binom{n}{i}}\\
&+\sum_{k=0}^{r}\lambda_{k}\frac{(2k+1)!!}{2^{k}(k+1)!}\sum_{j=\max(0,i+k-n)}^{\min(i,k)}
\frac{(-1)^{k-j}(2k+1)^{2}}{2^{2k}(2k-2j+1)(2j+1)}\frac{\binom{n-k}{i-j}\binom{2k}{k}\binom{2k}{2j}}{\binom{n}{i}}
.\end{aligned}
\end{equation*}
\end{corollary}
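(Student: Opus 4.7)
The plan is to derive the corollary directly from the preceding theorem by reformulating the half-integer binomial products $\binom{r+\frac{1}{2}}{k}\binom{r+\frac{1}{2}}{r-k}$ inside $\Phi_{i,n}^{r}$ into the integer-binomial shape that characterizes $\vartheta_{k,r}\binom{r}{k}$ in Theorem \ref{gen-jacinBer form}. Since the preceding theorem has already established $M_{i,r}^{n}=N_{r,i}^{n}$ via the expansion $\mathscr{U}_{r}^{(M,N)}(x)=\sum_{i=0}^{n}N_{r,i}^{n}B_{i}^{n}(x)$, expression \eqref{N-defn} provides a ready starting point, and all that remains is a purely algebraic replacement inside the summand.

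The core of the argument is the identity
\begin{equation*}
\binom{r+\tfrac{1}{2}}{k}\binom{r+\tfrac{1}{2}}{r-k}=\frac{(2r+1)^{2}}{2^{2r}(2r-2k+1)(2k+1)}\binom{2r}{r}\binom{2r}{2k},\qquad 0\le k\le r.
\end{equation*}
I would prove this by expanding each half-integer binomial through the Gamma-function definition $\binom{r+\frac{1}{2}}{k}=\Gamma(r+\frac{3}{2})/[k!\,\Gamma(r-k+\frac{3}{2})]$, converting each $\Gamma$-value at a half-integer into a double factorial via \eqref{n+half}, and then rewriting the resulting $(2m+1)!!$ and $(2m)!$ in terms of one another using \eqref{doublefac} together with the relation $(2m)!=(2m-1)!!\,2^{m}m!$ noted after it. The exponents of $2$, the factors of $\pi$, and the $k!$-type denominators collapse to yield exactly the claimed right-hand side; sanity checks at $(r,k)=(1,0)$, $(2,0)$, $(2,1)$ can be used to guard against off-by-one or sign slips.

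Once the identity is in hand, the final step is to substitute it into \eqref{N-defn}, once for the leading sum (index $k$, degree $r$) and once inside each term of the $\lambda_{k}$-sum (internal index $j$, degree $k$). This substitution is mechanical and produces exactly the formula asserted in the corollary. Hence the only nontrivial obstacle is the half-integer-to-integer binomial identity above; the surrounding manipulation is merely re-indexing of a formula inherited directly from the theorem.
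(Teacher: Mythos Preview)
Your proposal is correct and follows essentially the same route as the paper: both start from the expression \eqref{N-defn} inherited from the preceding theorem, establish the identical half-integer-to-integer binomial identity $\binom{r+\frac{1}{2}}{k}\binom{r+\frac{1}{2}}{r-k}=\frac{(2r+1)^{2}}{2^{2r}(2r-2k+1)(2k+1)}\binom{2r}{r}\binom{2r}{2k}$ via \eqref{n+half} and the double-factorial relations, and then substitute back. The only cosmetic difference is that the paper displays one intermediate factorization of the product before collapsing it, whereas you describe the collapse verbally.
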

\begin{proof}
From \eqref{gen-tche} in the proof of the previous theorem, it is clear that each Tschebyscheff-II polynomial of degree $r\leq n$ can be written in terms of Bernstein polynomials of degree $n.$ 
Applying \eqref{n+half} with some simplifications, we have
\begin{equation*}
\binom{r+\frac{1}{2}}{k}\binom{r+\frac{1}{2}}{r-k}
=\frac{(2r+1)}{2^{r}(2r-2k+1)(r-k)!k!}\frac{(2r-1)!!}{(2k-1)!!}\frac{(2r+1)}{(2k+1)}\frac{(2r-1)!!}{(2(r-k)-1)!!}.
\end{equation*}
Using the fact $(2n)!=(2n-1)!!2^{n}n!$ we get 
$$\binom{r+\frac{1}{2}}{r-k}\binom{r+\frac{1}{2}}{k}=\frac{(2r+1)^{2}}{2^{2r}(2r-2k+1)(2k+1)}\binom{2r}{r}\binom{2r}{2k}.$$
Substituting the last identity into \eqref{N-defn} we get the desired result.
\end{proof}

The following theorem introduced in \cite{Alqudahn:CharaII} will be used to simplify a main result.
\begin{theorem}\label{gen-int-ber-jac}\cite{Alqudahn:CharaII}
Let $B_{r}^{n}(x)$ be the Bernstein polynomial of degree $n$ and $\mathscr{U}_{i}^{(M,N)}(x)$ be the generalized Tschebyscheff-II polynomial of degree $i,$ then
for $i,r=0,1,\dots,n$ we have
\begin{equation*}
\int_{0}^{1}x^{\frac{1}{2}}(1-x)^{\frac{1}{2}}B_{r}^{n}(x)\mathscr{U}_{i}^{(M,N)}(x)dx
=\Lambda_{r,n}^{i}+\sum_{d=0}^{i}\lambda_{d}\Lambda_{r,n}^{d},
\end{equation*}
where $\lambda_{d}$ defined in \eqref{lmda},
\begin{equation}\label{newlmda}
\Lambda_{r,n}^{d}=\binom{n}{r}
\frac{(2d+1)!!}{2^{d}(d+1)!}
\sum_{j=0}^{d}(-1)^{d-j}\binom{d+\frac{1}{2}}{j}\binom{d+\frac{1}{2}}{d-j}
\frac{\Gamma(r+j+\frac{3}{2})\Gamma(n+d-r-j+\frac{3}{2})}{\Gamma(n+d+3)},\end{equation}
and $\Gamma(x)$ is the Gamma function.
\end{theorem}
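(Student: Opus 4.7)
The plan is to exploit the linearity of the integral together with the explicit decomposition of $\mathscr{U}_i^{(M,N)}$ into a sum of classical Tschebyscheff-II polynomials, and then to reduce the remaining integrals to Eulerian integrals via the Bernstein product rule. First, I would substitute \eqref{gen-Tschebyscheff-II-ccomb} into the integrand and exchange sum and integral, so that the quantity in question becomes
\begin{equation*}
\tfrac{(2i+1)!!}{2^{i}(i+1)!}\int_{0}^{1}x^{1/2}(1-x)^{1/2}B_{r}^{n}(x)U_{i}(x)\,dx+\sum_{d=0}^{i}\lambda_{d}\,\tfrac{(2d+1)!!}{2^{d}(d+1)!}\int_{0}^{1}x^{1/2}(1-x)^{1/2}B_{r}^{n}(x)U_{d}(x)\,dx.
\end{equation*}
It therefore suffices to identify $J_{r,n}^{d}:=\tfrac{(2d+1)!!}{2^{d}(d+1)!}\int_{0}^{1}x^{1/2}(1-x)^{1/2}B_{r}^{n}(x)U_{d}(x)\,dx$ with the quantity $\Lambda_{r,n}^{d}$ defined in \eqref{newlmda}; the $d=i$ case handles the first summand of the theorem and the $\lambda_{d}$-weighted combination handles the second.

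Next, I would expand $U_{d}(x)$ in the Bernstein basis of degree $d$ on $[0,1]$ by means of a representation of the same shape as \eqref{Tschebyscheff-II-basis fromat}, producing coefficients proportional to $\binom{d+1/2}{j}\binom{d+1/2}{d-j}/\binom{d}{j}$ with alternating sign. Inserting this expansion into $J_{r,n}^{d}$ reduces the problem to evaluating the elementary integral $\int_{0}^{1}x^{1/2}(1-x)^{1/2}B_{r}^{n}(x)B_{j}^{d}(x)\,dx$ for $j=0,\dots,d$. Using the Bernstein product identity $B_{r}^{n}(x)B_{j}^{d}(x)=\binom{n}{r}\binom{d}{j}x^{r+j}(1-x)^{n+d-r-j}$, the $\binom{d}{j}$ in the numerator cancels the one in the denominator of the $U_{d}$ coefficient, and what remains is precisely an Eulerian integral,
\begin{equation*}
\int_{0}^{1}x^{r+j+1/2}(1-x)^{n+d-r-j+1/2}\,dx=\frac{\Gamma(r+j+\tfrac{3}{2})\,\Gamma(n+d-r-j+\tfrac{3}{2})}{\Gamma(n+d+3)},
\end{equation*}
by \eqref{eulerian-int}. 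Factoring $\binom{n}{r}$ and the normalization $\tfrac{(2d+1)!!}{2^{d}(d+1)!}$ outside the sum over $j$ yields an expression that already has the exact outer shape of $\Lambda_{r,n}^{d}$.

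The remaining work is purely algebraic: collapse the leading constants using the double-factorial identities \eqref{doublefac}--\eqref{n+half}, in particular $(2d)!!=2^{d}d!$, and check that the residual sign on the $j$th term is $(-1)^{d-j}$. This bookkeeping is where I expect the only real obstacle to lie. The expansion of $U_{d}$ inherited from \eqref{Tschebyscheff-II-basis fromat} carries a $(-1)^{d+1}$ that must be reconciled with the $(-1)^{d-j}$ appearing in \eqref{newlmda}; the discrepancy is absorbed by the sign of $\binom{d+1/2}{d-j}$ after the $[-1,1]\to[0,1]$ substitution, so one has to track the factor of $(-1)^{j}$ picked up when moving between the two domain conventions. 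Once this matching is done cleanly, the identification $J_{r,n}^{d}=\Lambda_{r,n}^{d}$ is immediate, and the theorem follows by reassembling the two pieces produced in the first step.
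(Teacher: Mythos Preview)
Your approach is correct and is the natural one. Note, however, that the paper itself does not prove this theorem: it is merely quoted from \cite{Alqudahn:CharaII}, so there is no in-text proof to compare against. Your argument---substitute \eqref{gen-Tschebyscheff-II-ccomb}, expand each $U_d$ in its degree-$d$ Bernstein basis, multiply $B_r^n B_j^d=\binom{n}{r}\binom{d}{j}x^{r+j}(1-x)^{n+d-r-j}$ so that the $\binom{d}{j}$ cancels, and evaluate the resulting Beta integral via \eqref{eulerian-int}---is almost certainly the same computation carried out in that reference, and it reproduces $\Lambda_{r,n}^{d}$ on the nose.

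One clarification on your sign worry: the exponent $(-1)^{n+1}$ in \eqref{Tschebyscheff-II-basis fromat} is a misprint in the paper (it is constant in $k$, which cannot be right). The correct expansion, with sign $(-1)^{d-j}$, is the one implicit in Theorem~\ref{gen-jacinBer form}, since there $U_d(x)=\sum_{j=0}^{d}(-1)^{d-j}\vartheta_{j,d}B_j^d(x)$ and, by the identity proved in Corollary~\ref{gen-rabab2}, $\vartheta_{j,d}=\binom{d+\tfrac12}{j}\binom{d+\tfrac12}{d-j}\big/\binom{d}{j}$. If you start from that expansion rather than from \eqref{Tschebyscheff-II-basis fromat}, the sign $(-1)^{d-j}$ appears immediately and no reconciliation is needed; the prefactor $\frac{(d+1)(2d)!!}{(2d+1)!!}$ and the coefficient $\frac{(2d+1)!!}{2^d(d+1)!}$ from \eqref{gen-Tschebyscheff-II-ccomb} are exact reciprocals (use $(2d)!!=2^d d!$), so the normalization also drops out cleanly.
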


Finally, to write the Bernstein polynomial basis into generalized Tschebyscheff-II polynomial basis of degree $n,$ invert \eqref{gen-c_jac-matrix} and let $M_{i,r}^{n^{-1}},$ $N_{i,r}^{n^{-1}},$ $i,r=0,\dots,n$ be the entries of $M^{-1}$ and $N^{-1}$ respectively. The transformation of Bernstein polynomial into generalized Tschebyscheff-II polynomial basis of degree $n$ can then be written as
\begin{equation}\label{gen-Ber-Jac}B_{r}^{n}(x)=\sum_{i=0}^{n}N_{r,i}^{n^{-1}}\mathscr{U}_{i}^{(M,N)}(x).\end{equation}
To find the explicit closed form of $N_{r,i}^{n^{-1}},$ $i,r=0,1,\dots,n,$ multiply \eqref{gen-Ber-Jac} by $x^{\frac{1}{2}}(1-x)^{\frac{1}{2}}\mathscr{U}_{i}^{(M,N)}(x)$ and integrate over $[0,1]$ to have
\begin{equation}
\int_{0}^{1}x^{\frac{1}{2}}(1-x)^{\frac{1}{2}}B_{r}^{n}(x)\mathscr{U}_{i}^{(M,N)}(x)dx
=\sum_{i=0}^{n}N_{r,i}^{n^{-1}}\int_{0}^{1}x^{\frac{1}{2}}(1-x)^{\frac{1}{2}}\mathscr{U}_{i}^{(M,N)}(x)\mathscr{U}_{i}^{(M,N)}(x)dx.
\end{equation} 
Use the orthogonality relation \eqref{ortho-rel} to obatin
\begin{equation}
\int_{0}^{1}B_{r}^{n}(x)(1-x)^{\frac{1}{2}}x^{\frac{1}{2}}\mathscr{U}_{i}^{(M,N)}(x)dx
=\frac{\pi}{8}\left(\frac{(2i+1)!!}{2^{i}(i+1)!}\right)^{2}N_{r,i}^{n^{-1}}(1+\lambda_{i})^{2}.
\end{equation} 
Using \eqref{eulerian-int}, Theorem \ref{gen-int-ber-jac}, the fact that $M_{i,r}^{n}=N_{r,i}^{n},$ and $\Lambda_{r,n}^{d}$ defined in \eqref{newlmda} we get
\begin{equation}
M_{i,r}^{n^{-1}}=\frac{8}{\pi(1+\lambda_{i})^{2}}\left(\frac{2^{i}(i+1)!}{(2i+1)!!}\right)^{2}
\left(\Lambda_{r,n}^{i}
+\sum_{d=0}^{i}\lambda_{d}
\Lambda_{r,n}^{d}\right).
\end{equation}
Hence, we have the following theorem.
\begin{theorem}
The entries of the matrix of transformation of the Bernstein polynomial basis into the generalized Tschebyscheff-II polynomial basis
of degree $n$ are given by
\begin{equation*}
M_{i,r}^{n^{-1}}=\frac{8}{\pi(1+\lambda_{i})^{2}}\left(\frac{2^{i}(i+1)!}{(2i+1)!!}\right)^{2}
\left(\Lambda_{r,n}^{i}+\sum_{d=0}^{i}\lambda_{d}
\Lambda_{r,n}^{d}\right), \hspace{.1in} i,r=0,1,\dots,n.
\end{equation*}
\end{theorem}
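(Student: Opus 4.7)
The plan is to invert the matrix relation $c = M^{n} d$ from the previous theorem by computing the coefficients of the expansion
\begin{equation*}
B_{r}^{n}(x) = \sum_{i=0}^{n} N_{r,i}^{n^{-1}}\mathscr{U}_{i}^{(M,N)}(x)
\end{equation*}
directly. The natural strategy is to take the inner product of both sides against $\mathscr{U}_{i}^{(M,N)}$ under the continuous weight $x^{1/2}(1-x)^{1/2}$, using the (essential) orthogonality of the generalized Tschebyscheff-II basis to isolate $N_{r,i}^{n^{-1}}$. This reduces the problem to evaluating two integrals.

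The first is $\int_{0}^{1} x^{1/2}(1-x)^{1/2}B_{r}^{n}(x)\mathscr{U}_{i}^{(M,N)}(x)\,dx$, which is precisely the content of Theorem \ref{gen-int-ber-jac} and evaluates to $\Lambda_{r,n}^{i} + \sum_{d=0}^{i}\lambda_{d}\Lambda_{r,n}^{d}$ with $\Lambda_{r,n}^{d}$ given by \eqref{newlmda}. The second is the self inner product $\int_{0}^{1}x^{1/2}(1-x)^{1/2}\bigl(\mathscr{U}_{i}^{(M,N)}(x)\bigr)^{2}\,dx$. Here I would expand $\mathscr{U}_{i}^{(M,N)}$ via \eqref{gen-Tschebyscheff-II-ccomb}, merge the explicit $U_{i}$ term with the $k=i$ summand to produce the leading coefficient $\frac{(2i+1)!!}{2^{i}(i+1)!}(1+\lambda_{i})$ on $U_{i}(x)$, and invoke the classical orthogonality \eqref{ortho-rel} to extract the diagonal normalization $\frac{\pi}{8}\bigl(\frac{(2i+1)!!}{2^{i}(i+1)!}\bigr)^{2}(1+\lambda_{i})^{2}$.

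Equating these two expressions and solving for $N_{r,i}^{n^{-1}}$ gives the quotient form. Transposing through the identity $M_{i,r}^{n^{-1}} = N_{r,i}^{n^{-1}}$, which is the inverse analogue of $M_{i,r}^{n}=N_{r,i}^{n}$ established in the previous theorem by comparing \eqref{gen-c_jac} and \eqref{gen-c_jac-2}, then delivers the stated closed form.

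The main obstacle is bookkeeping in the normalization step. Since $\{\mathscr{U}_{i}^{(M,N)}\}$ is strictly orthogonal only with respect to the full measure \eqref{gen-wgt} that carries point masses at $x=\pm 1$, some care is required to justify that the purely continuous inner product still diagonalizes the basis in the form used and that only the $(1+\lambda_{i})^{2}$ factor survives. Once this normalization is in hand, the rest is substitution: Theorem \ref{gen-int-ber-jac} supplies the numerator, \eqref{ortho-rel} together with \eqref{gen-Tschebyscheff-II-ccomb} supplies the denominator, and the asserted closed form drops out by routine algebra.
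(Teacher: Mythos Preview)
Your proposal follows the paper's argument essentially step for step: expand $B_{r}^{n}$ in the generalized basis, pair against $\mathscr{U}_{i}^{(M,N)}$ with the continuous weight $x^{1/2}(1-x)^{1/2}$, invoke Theorem~\ref{gen-int-ber-jac} for the left side, use \eqref{ortho-rel} together with \eqref{gen-Tschebyscheff-II-ccomb} to obtain the normalization $\frac{\pi}{8}\bigl(\frac{(2i+1)!!}{2^{i}(i+1)!}\bigr)^{2}(1+\lambda_{i})^{2}$, and transpose via $M_{i,r}^{n^{-1}}=N_{r,i}^{n^{-1}}$. Your cautionary remark about the point masses in \eqref{gen-wgt} is apt---the paper simply applies \eqref{ortho-rel} without commenting on this---so you are, if anything, more careful than the original on exactly the point you flagged.
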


\paragraph{{\bf}ACKNOWLEDGEMENTS} The author thanks the anonymous referees for their fruitful suggestions,
which immensely helped to improve the presentation of the paper.

\end{document}